\newenvironment{proof}{\noindent {\bf Proof }}
{\hfill $\bullet$ \vspace{0.25cm}}
\newcommand{\dis}{\displaystyle}
\newtheorem{thm}{Theorem}
\newtheorem{prop}{\indent Proposition}
\newtheorem{rem}{\indent Remark}
\newtheorem{lem}{\indent Lemma}
\newcommand{\mmmintone}[1]{{\dis{\int\kern -.36cm-}}_{\kern-.21cm\substack{#1}}\;\;}
\newcommand{\mmmintwo}[2]{{\dis{\int\kern -.43cm-}}_{\kern-.21cm\substack{#1}}^{\substack{#2}}\;\;}
\newcommand{\submint}{{\scriptstyle{\int\kern -.66em -}}}
\newcommand{\submintone}[1]{{\scriptstyle{\int\kern -.66em-}}_{\scriptscriptstyle{\kern-.21em\substack{#1}}}}
\newcommand{\fracmint}{{\textstyle{\int\kern -.88em -}}}
\newcommand{\fracmintone}[1]{{\textstyle{\int\kern -.88em
-}}_{\scriptscriptstyle{\kern-.21em\substack{#1}}}\;}
\title{First Passage Time Densities through H\"{o}lder curves}
\author{Jimyeong Lee\footnote{  E-mail: jimyeong.lee@gssi.infn.it }\\ \\Gran Sasso Science Institute, Viale Francesco Crispi, 7, 67100, L'Aquila, Italy}
\date{\today}
\begin{document}

\maketitle

\begin{abstract}
We prove that for a standard Brownian motion, there exists a first-passage-time density function through a H\"{o}lder curve with exponent greater than $1/2$. By using a property of local time of a standard Brownian motion and the theories of partial differential equations in Cannon $\cite{Cannon}$, we find a sufficient condition for existence of the density function. We also show that this density function is proportional to the space derivative of the Green function of the heat equation with Dirichlet boundary condition at the moving boundary. 
\end{abstract}

\vskip 1cm

\section{Introduction}
\label{Fsec.1}
In this paper we will study the probability of the hitting time to a moving boundary. To state the main result, we need some notations on a Brownian motion. Let us call $P_{r,s}, r\in\mathbb{R}, s\geq 0$, the law on $\displaystyle C([s,\infty))$ of the Brownian motion $B_t$, $t\geq s$, which starts from $r$ at time $s$, i.e. $B_s=r$. For each $t>s$ the law of $B_t$ is absolutely continuous with respect to the Lebesgue measure and has a density $G_{s,t}(r,\cdot)$ which is the Gaussian $\displaystyle G(\cdot,t;r,s)=\frac{1}{\sqrt{2\pi (t-s)}}\exp{\left(-\frac{(\cdot-r)^{2}}{2(t-s)}\right)}$.  We denote by $E_{r,s}$ the expectation under $P_{r,s}$. For a given curve $\displaystyle X=\{t\rightarrow X_t\}$, $s\geq 0$ and $r<X_s$, we define
\begin{eqnarray}
\displaystyle \tau_{r,s}^{X}=\inf\{t \geq s: B_t\geq X_t \},\ and=\infty\ \textrm{if the set is empty},
\end{eqnarray}
where $B_s=r$ and denote by $F_{r,s}^{X}(dt)$ the distribution of $\displaystyle \tau_{r,s}^{X}$ induced by $P_{r,s}$. For s=0, we use abbreviated forms  $P_{r}$, $E_{r}$, $\displaystyle \tau_{r}^{X}$, $F_{r}^{X}(dx)$  instead of $P_{r,0}$, $E_{r,0}$, $\displaystyle \tau_{r,0}^{X}$, $F_{r,0}^{X}(dx)$ respectively whenever it is needed. In addition, for $t>0$, let us call $d\mu_{r_0}(\cdot,t)$ the positive measure on $(-\infty,X_t)$ such that 
\begin{eqnarray}
\label{1.0}
\displaystyle \int_{(-\infty,X_t)} d\mu_{r_0}(x,t)f(x)=E_{r_0}[f(B_t);\tau_{r_0}^{X}> t]
\end{eqnarray}
for all $\displaystyle f\in C_c^{\infty}(\mathbb{R})$ with $\displaystyle\textrm{supp}\hspace{0.5mm}f\Subset (-\infty,X_t)$. The main result in the paper is;
\begin{thm} 
\label{mainthm}
If $X$ is H\"{o}lder continuous on any finite interval in $[0,\infty)$ with exponent $\displaystyle\gamma\in\left(1/2,1\right]$ and $r_0<X_0$, then
\begin{enumerate}
\item\label{1.1} $d\mu_{r_0}(x,t)=G_{0,t}^{X}(r_0,x)dx$ where for all $x<X_t$,
\begin{eqnarray}
\label{12345}
\displaystyle G_{0,t}^{X}(r_0,x)=G_{0,t}(r_0,x)-\int_{[0,t)}F_{r_0}^{X}(ds)G_{s,t}(X_{s},x).
\end{eqnarray}
\item\label{1.2} $\displaystyle F_{r_0}^{X}(ds)$ has a density function $p$ on $[0,\infty)$, namely $F_{r_0}^{X}(ds)=p(s)ds$.\\
\item\label{1.3} $\displaystyle p(t)=-\frac{1}{2}\frac{\partial}{\partial x}G_{0,t}^{X}(r_0,x)\Big|_{x=X_t^{-}}$ for all\ $t>0$.\\
\item\label{1.4} $\displaystyle G_{0,t}^{X}(r_0,x)$ solves 
\begin{eqnarray} 
\label{1}\displaystyle v_t=\frac{1}{2}v_{xx},\ \ -\infty<x<X_t,\ t>0, \\
\label{2}\displaystyle \lim_{(x,t)\rightarrow (X_s,s)}v(x,t)=0,\  s>0,\\
\label{3} \displaystyle \lim_{(x,t)\rightarrow (y,0)}v(x,t)=\delta_{r_0}(y),\ -\infty<y<X_0.
\end{eqnarray}
\end{enumerate}
\end{thm}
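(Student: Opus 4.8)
The plan is to establish the four assertions in the order \ref{1.1}, \ref{1.2}, \ref{1.4}, \ref{1.3}: the representation \reff{12345} comes from the strong Markov property, a local-time estimate produces the density, and the heat-potential theory of Cannon then delivers the boundary value problem together with the flux formula. For part~\ref{1.1} I would fix $f\in C_c^\infty(\mathbb{R})$ with $\operatorname{supp}f\Subset(-\infty,X_t)$ and split
$$E_{r_0}[f(B_t)]=E_{r_0}[f(B_t);\tau_{r_0}^X>t]+E_{r_0}[f(B_t);\tau_{r_0}^X\le t].$$
Since $B$ and $X$ are continuous and $r_0<X_0$, on $\{\tau_{r_0}^X\le t\}$ one has $B_{\tau_{r_0}^X}=X_{\tau_{r_0}^X}$, so the strong Markov property at $\tau_{r_0}^X$ rewrites the last term as $\int_{[0,t]}F_{r_0}^X(ds)\int f(y)G_{s,t}(X_s,y)\,dy$. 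The endpoint $s=t$ places its mass at $X_t\notin\operatorname{supp}f$ and may be discarded, and identifying $E_{r_0}[f(B_t)]=\int f(x)G_{0,t}(r_0,x)\,dx$ yields \reff{12345}. The only delicate points are the measurability needed for the strong Markov step and checking that no surviving mass concentrates at $x=X_t$.

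For part~\ref{1.2} I would show that $F_{r_0}^X$ is absolutely continuous, which is where the local time of $B$ enters. Writing the survival function $S(t)=P_{r_0}(\tau_{r_0}^X>t)$, the increment $S(t)-S(t+h)=F_{r_0}^X((t,t+h])$ is the mass of paths that meet the barrier during $(t,t+h]$. I would bound this mass by the expected local time that $B$ accumulates at the moving level $X$ over that window; the occupation-density formula for $B$, combined with the Hölder modulus $|X_t-X_s|\le C|t-s|^\gamma$, allows a comparison with the explicit flat-barrier first-passage law and bounds $F_{r_0}^X((t,t+h])$ by a constant multiple of $h$ on compact time intervals. This gives local absolute continuity, hence $F_{r_0}^X(ds)=p(s)\,ds$.

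With $F_{r_0}^X=p\,ds$ the subtracted term in \reff{12345} becomes the single-layer heat potential $u(x,t)=\int_0^t p(s)G_{s,t}(X_s,x)\,ds$. Since $G_{0,t}(r_0,\cdot)$ and each $G_{s,t}(X_s,\cdot)$ solve \reff{1} for $t>s$, differentiating \reff{12345} under the integral (valid away from the curve) shows $G_{0,t}^X$ solves \reff{1}, and as $t\to0^+$ the potential vanishes while the free term tends to $\delta_{r_0}$, giving \reff{3}. The decisive estimate, valid precisely for $\gamma>1/2$, is
$$G_{s,t}(X_s,X_t)=\frac{1}{\sqrt{2\pi(t-s)}}\exp\!\left(-\frac{(X_t-X_s)^2}{2(t-s)}\right)\sim\frac{1}{\sqrt{2\pi(t-s)}},\qquad \left|\frac{\partial}{\partial x}G_{s,t}(X_s,x)\Big|_{x=X_t}\right|\le C\,(t-s)^{\gamma-3/2},$$
so that the boundary kernel and its spatial derivative are integrable in $s$ exactly because $\gamma-3/2>-1$. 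Cannon's theory then provides the jump relations for $u$ along the curve, which make $G_{0,t}^X$ continuous up to the boundary and verify the Dirichlet condition \reff{2}, proving part~\ref{1.4}. Finally part~\ref{1.3} follows by differentiating the surviving mass $S(t)=\int_{-\infty}^{X_t}G_{0,t}^X(r_0,x)\,dx$: using \reff{1}, integrating $\tfrac12(G_{0,t}^X)_{xx}$ and discarding the moving-endpoint term because $G_{0,t}^X(r_0,X_t)=0$ by \reff{2}, one obtains $S'(t)=\tfrac12\partial_x G_{0,t}^X(r_0,x)|_{x=X_t^-}$, and since $-S'(t)=p(t)$ this is exactly \reff{1.3}.

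I expect the main obstacle to be the boundary analysis underlying \reff{2}, together with the solvability of the resulting Volterra integral equation for the single-layer density. Everything rests on the estimate $\big|\partial_x G_{s,t}(X_s,X_t)\big|\le C(t-s)^{\gamma-3/2}$, whose integrability in $s$ breaks down at $\gamma=1/2$; controlling the moving boundary with only Hölder (rather than $C^1$) regularity, and ruling out a defect of surviving mass at the barrier, is precisely where the hypothesis $\gamma>1/2$ is indispensable and where the bulk of the technical work lies.
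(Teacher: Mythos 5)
Your items 1 and 4 track the paper closely (strong Markov property for \reff{12345}; single-layer heat potential with the kernel bound $|\partial_x G_{s,t}(X_s,X_t)|\le C(t-s)^{\gamma-3/2}$, integrable precisely because $\gamma>1/2$; Cannon's jump relation). But the two steps you defer as ``technical work'' are where the plan breaks down. For item 2, the bound $F_{r_0}^X((t,t+h])\le Ch$ cannot be obtained by comparing with a flat barrier or with expected local time at the moving level: the mass surviving to time $t$ could a priori concentrate arbitrarily close to $X_t$, in which case the probability of crossing during $(t,t+h]$ need not be $O(h)$; ruling this out amounts to showing the surviving density vanishes linearly at the barrier, which is essentially the statement you are trying to prove, so the sketch is circular. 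Moreover the occupation-density formula concerns fixed levels, and Tanaka-type arguments for $B_t-X_t$ are unavailable since a H\"{o}lder-$\gamma$ curve need not be of bounded variation or a semimartingale. The paper never proves absolute continuity probabilistically: it works first with smooth initial data $h_n$ approximating $\delta_{r_0}$, shows (via uniqueness for the initial-boundary value problem and the Abel-operator reduction to a Volterra equation of the second kind) that the solutions $u_n$ have continuous boundary flux $p_n=-\tfrac12 (u_n)_x(X_t^-,t)$, identifies $\int_I p_n$ with the mass lost $\Delta_I^X(u_n)=\int h_n(\xi)P_\xi[\tau_\xi^X\in I]\,d\xi$, proves $p_n\to p$ where $p$ is the unique solution of \reff{111}, and finally sandwiches $P_{r_0}(\tau_{r_0}^X\in I)$ between limits over $h_n$ supported on either side of $r_0$ using monotonicity of $\xi\mapsto P_\xi(\tau_\xi^X\in I)$. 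Absolute continuity is the output of that construction, not an input to it.

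For item 3, your derivation of $S'(t)=\tfrac12\partial_xG_{0,t}^X(r_0,x)|_{x=X_t^-}$ by writing $S'=\int\partial_tG^X=\tfrac12\int(G^X)_{xx}$ and discarding the moving endpoint is exactly the heuristic the paper explicitly declines to make rigorous: there is no control of $u_{xx}$ up to a merely H\"{o}lder boundary, so both the differentiation of the integral at the moving endpoint and the integration by parts are unjustified. The paper instead integrates the representation $u=\text{(free term)}+\tfrac12\int_0^tG(\cdot,t;X_\tau,\tau)u_x(X_\tau^-,\tau)\,d\tau$ over all of $x\in\mathbb{R}$ and applies Fubini, obtaining $\int_{-\infty}^{X_t}u\,dx=\int h+\tfrac12\int_0^tu_x(X_\tau^-,\tau)\,d\tau$ with no boundary terms. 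Finally, the jump relation alone does not ``verify the Dirichlet condition'' \reff{2}: it governs $\partial_x$ of the layer potential, whereas the vanishing of $G_{0,t}^X$ at the curve is established probabilistically (immediate hitting of a H\"{o}lder-$\tfrac12$ curve via $\limsup_{t\downarrow0}B_t/\sqrt{t}=\infty$, Proposition~\ref{3456}), a point your outline leaves unaddressed.
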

\textbf{Remarks}
\vskip 0.1cm
Item $\ref{1.4}$ of $\textbf{Theorem~\ref{mainthm}}$ states that for any $r_0<X_0$ the function $\displaystyle G_{0,\cdot}^{X}(r_0,\cdot)$ given by $(\ref{12345})$ is the Green function of the heat equation with Dirichlet boundary conditions at the moving boundary $X$. In $\cite{AP}$, when a moving boundary is infinitely differentiable, it is showed that the space derivative of the Green function of the heat equation at the boundary is proportional to the hitting time density function. Likewise by items  $\ref{1.2}$ and  $\ref{1.3}$ of $\textbf{Theorem~\ref{mainthm}}$, the space derivative of $\displaystyle G_{0,\cdot}^{X}(r_0,\cdot)$ at the moving boundary is proportional to the hitting time density function $p$.\\
For the case when $X_t=a+bt$, it is well known(see for instance $\cite{KS}$) that for $r<a$, $\displaystyle\tau_r^{X}$ has a probability density function given by $\displaystyle f(t)=\frac{a-r}{\sqrt{2\pi t^3}}\exp{\left(-\frac{(a+bt-r)^2}{2t}\right)}\mathbf{1}_{t>0}$. In addition, there is another result in $\cite{RSS}$ when $X_t=a+bt^{\frac{1}{p}}$ for $p\geq 2$, $r<a$, then $\displaystyle\tau_r^{X}$ has a probability density function. In \cite{BD}, it is showed that if the boundary behaves as a Lipschitz curve in a local time, then the first passage time density can be expressed explicitly.
\\
In $\cite{PS}$, it is proved that for any continuous curve $X_t$ and $r<X_0$, there is a distribution $\displaystyle F_r^{X}$  of $\displaystyle \tau_{r}^{X}$ which satisfies the following integral equation(called the Master Equation):
\begin{eqnarray}
\label{0000}
\displaystyle\Psi\left(\frac{z-r}{\sqrt{t}}\right)=\int_{0}^{t}\Psi\left(\frac{z-X_s}{\sqrt{t-s}}\right)F_r^{X}(ds),
\end{eqnarray}
where $z\geq X_t$, $t>0$ and $\displaystyle\Psi(z)=\int_z^{\infty}\frac{1}{\sqrt{2\pi}}\exp{\left(-\frac{x^{2}}{2}\right)}dx$. This can be proved intuitively as follows; the left hand side of $(\ref{0000})$ is the probability that a Brownian motion starts at $r$ at time 0 and reaches $z$ greater or equal to $X_t$ at time $t$. Then it should hit the boundary at least once which implies the right hand side of of $(\ref{0000})$.
Moreover, it is showed that if $X_t$ is $\displaystyle C^{1}$, then there exists a continuous density function $f$ of  $\displaystyle F_r^{X}$. There is an extension of this result, in $\cite{RSS}$, to curves $X$ which are differentiable with $\displaystyle \left|\frac{dX_t}{dt}\right|\leq Ct^{-\alpha}$ for some constant $C>0$ and $\alpha<1/2$.\\
By $(\ref{0000})$, for any continuous curve $X_t$ and $r<X_0$, we have
\begin{eqnarray}
\label{2.3}
\displaystyle\Psi\left(\frac{X_t-r}{\sqrt{t}}\right)=\int_{0}^{t}\Psi\left(\frac{X_t-X_s}{\sqrt{t-s}}\right)F_r^{X}(ds).
\end{eqnarray}
which can be regarded as an integral equation for $\displaystyle F_r^{X}(ds)$.
In $\cite{TM}$, the equation $(\ref{2.3})$ is studied when $X$ is H\"{o}lder continuous with exponent greater than $1/2$. It is proved that there exists a unique continuous function $q$ such that
\begin{eqnarray}
\displaystyle \Psi\left(\frac{X_t-r}{\sqrt{t}}\right)=\int_{0}^{t}\Psi\left(\frac{X_t-X_s}{\sqrt{t-s}}\right)q(s)ds.
\end{eqnarray}
To conclude that $F_r^{X}(ds)=q(s)ds$, one still needs that $\displaystyle F_r^{X}(ds)$ is absolutely continuous with respect to Lebesgue measure. The analysis in $\cite{TM}$ as well as the proof of $\textbf{Theorem~\ref{mainthm}}$ uses extensively the work by Cannon $\cite{Cannon}$ on the heat equation with the moving boundary.

\vskip1cm

\setcounter{equation}{0}

\section{Preliminaries: a weaker form of \textbf{Theorem~\ref{mainthm}}} 

\label{Fsec.2}

We define $D$ as
$$\displaystyle D:=\{(x,t):-\infty<x<X_t,\ t>0\}$$
and
\begin{eqnarray}
\label{2345}
\displaystyle u(x,t):=\int_{-\infty}^{0}h(\xi)G_{0,t}^{X}(\xi,x)d\xi
\end{eqnarray}
for given $\displaystyle h\in C_c^{\infty}((-\infty,X_0);\mathbb{R}_{+})$ and all $(x,t)\in D$.\\
If $X$ is H\"{o}lder continuous with exponent $\gamma\in[1/2,1]$, then $u$ solves the heat equation in $D$ with the initial condition $h$ and Dirichlet boundary condition.(See $\textbf{Theorem~\ref{thm1}}$ below) However uniqueness fails for the heat equation with the initial function $h$ and Dirichlet boundary conditions (see $\textbf{Remark~\ref{rem1} and \ref{rem2}} $ below).\\
To have the uniqueness, first of all, we restrict time variable of $D$ in a finite interval. Thus we fix $T>0$ and define the parabolic cylinder $D_T$ which is a subset of $D$ as
$$\displaystyle D_T:=\{(x,t):-\infty<x<X_t,\ 0<t\leq T\}.$$
Consider the following initial-boundary value problem 
\begin{equation}
\left\{ \,
\begin{IEEEeqnarraybox}[][c]{l?s}
\IEEEstrut
\displaystyle v\in C(\overline{D_T})\cap C^{2,1}(D_T), \\
\displaystyle v_t=\frac{1}{2}v_{xx},\ \ (x,t)\in D_T, \\
 \displaystyle v(X_t,t)=0,\ 0 < t \leq T,\\ 
  \displaystyle v(x,0)=h(x),\ -\infty<x<X_0,\\ 
  \displaystyle  \lim_{x\rightarrow -\infty}\sup_{0< t< T}|v(x,t)|=0.
\IEEEstrut
\end{IEEEeqnarraybox}
\right.
\label{4242}
\end{equation}

We prove the following weaker form of $\textbf{Theorem~\ref{mainthm}}$.
\begin{thm}
\label{thm1}
Let $X$ be a H\"{o}lder continuous curve on any finite interval in $[0,\infty)$ with exponent $\gamma\in[1/2,1]$ and let $X_0=0$. 
\begin{enumerate}
\item\label{thm1.1}The function $u$ defined in $(\ref{2345})$ is the unique solution of $(\ref{4242})$.\\
\item\label{thm1.2}If $\gamma\in(1/2,1]$, then $u$ has the left hand derivative at the boundary $u_x({X_t^{-}},t)$ which is continuous on $(0,\infty)$. Moreover, for all $t > 0$, $\displaystyle p_{h}(t):=-\frac{1}{2}{u_x}(X_{t}^{-},t)$ satisfies
\begin{eqnarray}
\label{888}
\displaystyle p_{h}(t)=-\int_{-\infty}^{0}h(\xi)G_x(X_t,t;\xi,0)d\xi+\int_{0}^{t}G_x(X_t,t;X_\tau,\tau)p_{h}(\tau)d\tau.
\end{eqnarray}

\end{enumerate}

\end{thm}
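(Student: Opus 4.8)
The plan is to split $u$ into a free Gaussian part and a boundary potential and to recognise the boundary flux as the space derivative of a single-layer heat potential whose density solves an Abel-type Volterra equation. Writing $\nu(ds):=\int_{-\infty}^{0}h(\xi)F_{\xi}^{X}(ds)\,d\xi$ for the $h$-averaged hitting measure, the definition $(\ref{2345})$ together with $(\ref{12345})$ gives $u=u_{1}-u_{2}$, where $u_{1}(x,t)=\int_{-\infty}^{0}h(\xi)G_{0,t}(\xi,x)\,d\xi$ and $u_{2}(x,t)=\int_{[0,t)}G_{s,t}(X_{s},x)\,\nu(ds)$. To prove item $\ref{thm1.1}$ I would first verify the heat equation in $D_{T}$ by differentiating the two Gaussian kernels under the integral sign; this is licit because for $x<X_{t}$ the factor $\exp(-(x-X_{s})^{2}/2(t-s))$ suppresses the $(t-s)^{-1/2}$ singularity of $G_{s,t}(X_{s},x)$ as $s\uparrow t$, and both $G_{0,t}(\xi,\cdot)$ and $G_{s,t}(X_{s},\cdot)$ are caloric. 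The initial condition is obtained from $\nu([0,t))=\int h(\xi)P_{\xi}(\tau^{X}_{\xi}<t)\,d\xi\to 0$ as $t\downarrow0$ (the support of $h$ is bounded away from $X_{0}$) together with the approximate-identity property of $G_{0,t}$, and the decay as $x\to-\infty$ is immediate from the Gaussian tail and the compact support of $h$.

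The delicate part of item $\ref{thm1.1}$ is the Dirichlet condition $u(X_{t}^{-},t)=0$, which I would reduce to the single-particle statement $\lim_{x\to X_{t}^{-}}G_{0,t}^{X}(\xi,x)=0$. Conditioning on the first passage time via the strong Markov property yields, for $x<X_{t}$, the decomposition $G_{0,t}(\xi,x)=G_{0,t}^{X}(\xi,x)+\int_{[0,t)}G_{s,t}(X_{s},x)F_{\xi}^{X}(ds)$, while for $x>X_{t}$ the killed density vanishes and the same identity holds without the $G_{0,t}^{X}$ term. Since $F_{\xi}^{X}$ has no atoms (a first passage exactly at time $t$ would force $B_{t}=X_{t}$, an event of probability zero) and the single-layer heat potential $x\mapsto\int_{[0,t)}G_{s,t}(X_{s},x)F_{\xi}^{X}(ds)$ is continuous across the curve, comparing the one-sided limits, which share the same continuous potential value, gives $G_{0,t}^{X}(\xi,X_{t}^{-})=0$; here local-time bounds on the occupation near the boundary enter to make the potential estimates uniform. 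Uniqueness then follows from the maximum principle for the heat equation on the unbounded cylinder $D_{T}$: the difference $w$ of two solutions of $(\ref{4242})$ is caloric with zero initial and boundary data and $\sup_{0<t<T}|w(x,t)|\to0$ as $x\to-\infty$, and this decay is precisely the hypothesis under which Cannon's uniqueness theorem $\cite{Cannon}$ forces $w\equiv0$.

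For item $\ref{thm1.2}$ I would start from the Dirichlet condition just established, which in terms of the decomposition reads $\int_{-\infty}^{0}h(\xi)G_{0,t}(\xi,X_{t})\,d\xi=\int_{[0,t)}G_{s,t}(X_{s},X_{t})\,\nu(ds)$. This is a first-kind Volterra equation for $\nu$ whose kernel $G_{s,t}(X_{s},X_{t})$ carries the Abel singularity $(t-s)^{-1/2}$; by the inversion theory of Cannon $\cite{Cannon}$ and the argument of $\cite{TM}$, valid once $\gamma>1/2$, it has a unique continuous solution, so $\nu(ds)=\phi(s)\,ds$ with $\phi$ continuous. Consequently $u_{2}$ is a single-layer heat potential with continuous density, and the classical jump relation for its space derivative gives that $\partial_{x}u_{2}(X_{t}^{-},t)$ exists, is continuous on $(0,\infty)$, and equals $\phi(t)+\int_{0}^{t}G_{x}(X_{t},t;X_{s},s)\phi(s)\,ds$, the jump coefficient being exactly $1$ for the normalisation $v_{t}=\tfrac12 v_{xx}$. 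As $u_{1}$ is smooth up to the boundary with $\partial_{x}u_{1}(X_{t},t)=\int_{-\infty}^{0}h(\xi)G_{x}(X_{t},t;\xi,0)\,d\xi$, the left derivative $u_{x}(X_{t}^{-},t)=\partial_{x}u_{1}(X_{t},t)-\partial_{x}u_{2}(X_{t}^{-},t)$ exists and is continuous. To finish I would identify $p_{h}=-\tfrac12 u_{x}(X_{t}^{-},t)$ with $\phi$ by a mass-balance computation: integrating $u_{t}=\tfrac12 u_{xx}$ over $(-\infty,X_{t})$ and using $u(X_{t},t)=0$ and the decay at $-\infty$ gives $\frac{d}{dt}\int_{-\infty}^{X_{t}}u\,dx=\tfrac12 u_{x}(X_{t}^{-},t)=-p_{h}(t)$, which equals $-\phi(t)$ by conservation of the total mass $\int_{-\infty}^{X_{t}}u\,dx+\nu([0,t))=\int_{-\infty}^{0}h(\xi)\,d\xi$. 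Substituting $p_{h}=\phi$ into the derivative formula collapses the two factors $\tfrac12$ and yields exactly $(\ref{888})$.

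I expect the main obstacle to be the boundary analysis underlying both items: making the Dirichlet limit and, above all, the jump relation for $\partial_{x}u_{2}$ rigorous at a merely H\"older boundary. Everything hinges on the kernel $G_{x}(X_{t},t;X_{s},s)$, whose singularity is $|X_{t}-X_{s}|/(t-s)^{3/2}\lesssim(t-s)^{\gamma-3/2}$; this is integrable precisely when $\gamma>1/2$, the same condition that renders the curve asymptotically flat at the diffusive scale $\sqrt{t-s}$ and hence restores the flat-boundary jump coefficient. This explains why item $\ref{thm1.2}$ requires the strict inequality, whereas item $\ref{thm1.1}$, relying only on the integrable singularity $(t-s)^{-1/2}$, tolerates $\gamma=1/2$. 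Solving the first-kind Abel--Volterra equation for $\phi$ and transferring the smooth-boundary single-layer potential estimates of $\cite{Cannon}$ to the H\"older setting will be the technical heart of the argument.
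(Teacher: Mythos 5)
Your overall architecture (free Gaussian part plus a single-layer heat potential, Abel inversion of a first-kind Volterra equation, the jump relation, a mass balance) is close to the paper's, but the proposal has two genuine gaps, and they sit exactly at the points the paper is engineered to get around. First, for the Dirichlet condition in item \ref{thm1.1} you reduce everything to the continuity of $x\mapsto\int_{[0,t)}G_{s,t}(X_s,x)F_{\xi}^{X}(ds)$ across the curve. For a single-layer potential whose density is a general measure -- and you do not yet know that $F_{\xi}^{X}$ is absolutely continuous, since that is what the whole paper is proving -- continuity at $x=X_t$ requires uniform control of $F_{\xi}^{X}$ near $s=t$ against the $(t-s)^{-1/2}$ singularity of the kernel; a crude bound such as $F_{\xi}^{X}([t-\delta,t))\lesssim\delta^{1/2}$ is not even enough (the dyadic sum $\sum_k 2^{k/2}F_{\xi}^{X}([t-2^{-k},t-2^{-k-1}))$ then only gives a logarithmic divergence bound), and the phrase ``local-time bounds make the potential estimates uniform'' is not a proof. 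The paper avoids this entirely: it writes $u(x,t)=E_{x}[h(B_t);\tau_{x}^{X^{\prime}}>t]$ by time reversal and gets $u\to 0$ at the boundary from $\textbf{Proposition~\ref{3456}}$, i.e.\ from the almost-sure oscillation $\limsup_{t\downarrow 0}B_t/\sqrt{t}=\infty$ beating the H\"older-$\gamma$ modulus of $X$ when $\gamma\ge 1/2$. This, and not integrability of $(t-s)^{-1/2}$, is why item \ref{thm1.1} tolerates $\gamma=1/2$.

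Second, in item \ref{thm1.2} you pass from ``the first-kind equation has a unique \emph{continuous} solution $\phi$'' to ``$\nu(ds)=\phi(s)\,ds$''. Uniqueness within a class of continuous (or $C^{0}_{(\gamma)}$) functions does not exclude singular measure solutions; this is precisely the gap the introduction points out in the approach of $\cite{TM}$. The paper's route is different: it builds $v$ as in $(\ref{435})$ with a \emph{continuous} density $\varphi$ solving $(\ref{11})$, identifies $v=u$ by the PDE uniqueness of item \ref{thm1.1} (never by uniqueness of the Volterra equation in a class of measures), applies the jump relation to get continuity of $u_x(X_t^{-},t)$, and only afterwards (Section \ref{Fsec.3}) identifies $-\tfrac12 u_x(X^{-},\cdot)$ with the density of the hitting distribution through the mass-lost functional $\Delta_I^{X}(u)$. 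Relatedly, your mass balance $\frac{d}{dt}\int_{-\infty}^{X_t}u\,dx=\tfrac12 u_x(X_t^{-},t)$, obtained by integrating $u_t=\tfrac12 u_{xx}$ up to the moving boundary, is exactly the heuristic the paper explicitly declares it cannot justify (there is no control of $u_{xx}$ at the boundary); the rigorous substitute is the Green's-identity representation $(\ref{777})$, integrated in $x$ with Fubini. To close the argument you would need to supply these missing steps, or else adopt the paper's probabilistic boundary argument and the representation $(\ref{777})$.
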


\begin{rem}
\label{rem1}
The uniqueness for $(\ref{4242})$ is not guaranteed if we do not assume $\displaystyle v\in C(\overline{D_T})$. When $X_t=0$ for all $t$ and $h$ is identically $0$, if $v(x,t)$ is given by $\displaystyle \frac{1}{\sqrt{2\pi t}}\left\{-\frac{x}{t}\right\}\exp\left(-\frac{x^{2}}{2t}\right)$, then this satisfies the heat equation with the initial data $0$ and is also $0$ on the boundary, but this is not continuous at $(0,0)$. 
\end{rem}
\begin{rem}
\label{rem2}
We need the condition $\displaystyle \lim_{x\rightarrow -\infty}\sup_{0< t< T}|v(x,t)|=0$ to have uniqueness. Indeed, for $X_t=0$ for all $t\geq 0$, the function
$$\displaystyle v(x,t)=\sum_{n=0}^{\infty}f^{(n)}\left(\frac{t}{2}\right)\frac{x^{2n+1}}{(2n+1)!},$$
where
$$
\displaystyle f(t)=
\begin{cases}
\displaystyle \exp{\left(-\frac{1}{t^{2}}\right)},\  \ t>0, \\
0,\hspace{7.5mm}t\leq0.
\end{cases}
$$
satisfies the heat equation with the initial data $0$ and is also $0$ on the boundary. Furthermore, $\displaystyle v\in C(\overline{D_T})\cap C^{2,1}(D_T)$.
\end{rem}
\begin{rem}
\label{rem.3}
It can be shown $\displaystyle\lim_{t\rightarrow 0}p_h(t)=0$ by $(\ref{888})$ and $\textbf{Lemma}~\ref{lem1}$ below. Thus $u_x({X_t^{-}},t)$ is continuous on $[0,\infty)$.
\end{rem}

Following $\cite{Cannon}$, we introduce $\displaystyle C_{(\nu)}^{0}\left((0,T]\right)$, $0<\nu \leq 1$, as the subspace of $C((0,T])$ that consists of those functions $\varphi$ such that
$$\displaystyle \lVert \varphi\rVert_{T}^{(\nu)}=\sup_{0<t\leq T}t^{1-\nu}|\varphi(t)|<\infty.$$
Then $\displaystyle C_{(\nu)}^{0}\left((0,T]\right)$ is a Banach space under the norm $\displaystyle \lVert \cdot \rVert_{T}^{(\nu)}$. 

We also introduce the following lemmas from $\cite{Cannon}$ which play an essential role in our analysis.

\begin{lem}[\emph{jump relation}]
For $\displaystyle\varphi\in C_{(\nu)}^{0}\left((0,T]\right)$, we have
\begin{eqnarray}
\lim_{x\rightarrow X_t^{\pm}}\frac{\partial w_\varphi}{\partial x}(x,t)=\mp\varphi(t)+\int_{0}^{t}G_x(X_t,t;X_\tau,\tau)\varphi(\tau)d\tau,
\end{eqnarray}
where $\displaystyle w_\varphi(x,t)=\int_{0}^{t}G(x,t;X_\tau,\tau)\varphi(\tau)d\tau$.
\end{lem}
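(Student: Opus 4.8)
The plan is to treat $w_\varphi$ as a single-layer heat potential and to extract the jump from the singular part of its spatial derivative near the curve, i.e.\ from the contribution of $\tau$ close to $t$. Fixing $t>0$, I would first justify differentiating under the integral sign for $x\neq X_t$, obtaining
\begin{equation}
\frac{\partial w_\varphi}{\partial x}(x,t)=\int_0^t G_x(x,t;X_\tau,\tau)\varphi(\tau)\,d\tau,\qquad G_x(x,t;\xi,\tau)=-\frac{x-\xi}{t-\tau}\,G(x,t;\xi,\tau).
\end{equation}
This is legitimate when $x\neq X_t$ because as $\tau\uparrow t$ the factor $x-X_\tau$ stays bounded away from $0$ while $(t-\tau)^{-3/2}\exp\!\left(-(x-X_\tau)^2/2(t-\tau)\right)$ decays faster than any power, and near $\tau=0$ the only singularity is the integrable factor $\tau^{\nu-1}$ coming from $\varphi\in C_{(\nu)}^{0}\left((0,T]\right)$.

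Next I would freeze the boundary at its endpoint value $X_t$ and split $\frac{\partial w_\varphi}{\partial x}(x,t)=A(x,t)+B_1(x,t)+B_2(x,t)$, where
\begin{eqnarray}
A(x,t)&=&\varphi(t)\int_0^t G_x(x,t;X_t,\tau)\,d\tau,\\
B_1(x,t)&=&\int_0^t\big[G_x(x,t;X_\tau,\tau)-G_x(x,t;X_t,\tau)\big]\varphi(\tau)\,d\tau,\\
B_2(x,t)&=&\int_0^t G_x(x,t;X_t,\tau)\big[\varphi(\tau)-\varphi(t)\big]\,d\tau.
\end{eqnarray}
The term $A$ is the jump-producing one: substituting $u=t-\tau$ and then $v=(x-X_t)/\sqrt{u}$ turns $\int_0^t G_x(x,t;X_t,\tau)\,d\tau$ into a tail integral of a standard Gaussian, and since $G_x(x,t;X_t,\tau)$ keeps a fixed sign for fixed sign of $x-X_t$, letting $x\to X_t^{\pm}$ gives $\mp1$, so that $A(x,t)\to\mp\varphi(t)$.

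It then remains to show that $B_1$ converges to the stated integral and $B_2$ vanishes. For $B_1$, the mean value theorem in the source variable gives $G_x(x,t;X_\tau,\tau)-G_x(x,t;X_t,\tau)=G_{x\xi}(x,t;\xi^{*},\tau)(X_\tau-X_t)$ for some $\xi^{*}$ between $X_\tau$ and $X_t$; since $G_{x\xi}$ is of order $(t-\tau)^{-3/2}$ times a bounded Gaussian factor and local H\"older continuity gives $|X_\tau-X_t|\le C|t-\tau|^{\gamma}$, the integrand is dominated by $C(t-\tau)^{\gamma-3/2}|\varphi(\tau)|$ uniformly for $x$ in a neighbourhood of $X_t$. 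Because $\gamma>1/2$ this bound is integrable near $\tau=t$, so dominated convergence yields $B_1(x,t)\to\int_0^t G_x(X_t,t;X_\tau,\tau)\varphi(\tau)\,d\tau$, where I also use $G_x(X_t,t;X_t,\tau)=0$. For $B_2$, the sign of $G_x(x,t;X_t,\tau)$ is constant in $\tau$, so $\int_0^t|G_x(x,t;X_t,\tau)|\,d\tau\to1$ stays bounded; splitting the interval at $t-\delta$ and using continuity of $\varphi$ at $t$ on $[t-\delta,t]$ together with $G_x(x,t;X_t,\tau)\to0$ pointwise on $[0,t-\delta]$ as $x\to X_t$ shows $B_2(x,t)\to0$. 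Adding the three limits produces the jump relation.

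The hard part will be the uniform-in-$x$ domination of $B_1$ near $\tau=t$: this is precisely where the hypothesis $\gamma>1/2$ enters, since it is exactly the threshold that makes $(t-\tau)^{\gamma-3/2}$ integrable (and indeed what makes the limiting integral $\int_0^t G_x(X_t,t;X_\tau,\tau)\varphi(\tau)\,d\tau$ finite in the first place). The frozen-boundary computation in $A$ is the other delicate point, requiring the change of variables above and care with the sign as $x\to X_t^{\pm}$; the estimates for $B_2$ are then routine.
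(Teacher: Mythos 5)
The paper does not actually prove this lemma: it is imported verbatim from Cannon's book (Chapter 14 there), so there is no in-paper argument to compare against. Your proof is correct and is essentially the classical single-layer-potential argument that Cannon's treatment is based on. The decomposition into $A+B_1+B_2$ telescopes correctly back to $\int_0^t G_x(x,t;X_\tau,\tau)\varphi(\tau)\,d\tau$; the frozen-boundary term evaluates explicitly, via $u=t-\tau$, $v=(x-X_t)/\sqrt{u}$, to $\int_0^t G_x(x,t;X_t,\tau)\,d\tau=-2\Psi\bigl((x-X_t)/\sqrt{t}\bigr)$ for $x>X_t$ (and $+2\Psi$ of the absolute value for $x<X_t$), which gives the $\mp\varphi(t)$ jump; and the bound $|G_{x\xi}|\le C(t-\tau)^{-3/2}$ combined with $|X_t-X_\tau|\le C|t-\tau|^{\gamma}$, $\gamma>1/2$, and $|\varphi(\tau)|\le\|\varphi\|_T^{(\nu)}\tau^{\nu-1}$ gives exactly the integrable dominating function $(t-\tau)^{\gamma-3/2}\tau^{\nu-1}$ needed for $B_1$ (this is the paper's Lemma~4, the Beta-function identity). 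Your treatment of $B_2$ is also sound, since the same change of variables shows $\int_0^t|G_x(x,t;X_t,\tau)|\,d\tau=2\Psi\bigl(|x-X_t|/\sqrt{t}\bigr)\le 1$ uniformly in $x$, which is what makes the near-$t$ piece small via continuity of $\varphi$ at $t$. You correctly identify that the H\"older exponent $\gamma>1/2$ is the hypothesis doing the real work, even though the lemma's statement in the paper leaves it implicit; as used in the paper the lemma is only ever invoked in that regime, where the limiting integral converges absolutely.
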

 For two continuous curves $s_1$, $s_2$ such that $s_1(t)<s_2(t), t\in[0,T]$, let us set $\displaystyle E_T:=\{(x,t): s_{1}(t)<x<s_{2}(t),\ 0<t\leq T\}$ and $\displaystyle B_T:=\{(s_{i}(t),t): 0\leq t\leq T,\ i\in\{1,2\}\}\cup\{(x,0): s_{1}(0)< x< s_{2}(0)\}$.

\begin{lem}[\emph{The Weak Maximum(Minimum) Principle}]
For a solution $u$ of  $\displaystyle u_t=\frac{1}{2}u_{xx}$ in $E_T$, which is continuous in $E_T\cup B_T$,
\begin{eqnarray}
\displaystyle \max_{E_T\cup B_T}u= \max_{B_T}u.\ \ \left(\min_{E_T\cup B_T}u= \min_{B_T}u.\right)
\end{eqnarray}
\end{lem}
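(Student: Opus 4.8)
The plan is to follow the classical barrier argument for the parabolic maximum principle: I would reduce the asserted equality to the single inequality $\max_{E_T\cup B_T}u\le \max_{B_T}u$ (the reverse being trivial since $B_T\subset E_T\cup B_T$), establish it by perturbing $u$ into a strict subsolution, and finally deduce the minimum principle by applying the result to $-u$. Before anything else I would record that $E_T\cup B_T$ is compact: because $s_1,s_2$ are continuous on $[0,T]$, this set equals $\{(x,t):s_1(t)\le x\le s_2(t),\ 0\le t\le T\}$, which is closed and bounded, so the continuous function $u$ does attain its maximum there and all the maxima below are meaningful.

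The key step is the perturbation. For $\epsilon>0$ I set $v:=u-\epsilon t$, so that
\[
v_t-\tfrac12 v_{xx}=u_t-\tfrac12 u_{xx}-\epsilon=-\epsilon<0,
\]
i.e. $v$ is a strict subsolution. I then claim $v$ cannot attain its maximum over $E_T\cup B_T$ at any point of $E_T$. Suppose for contradiction that the maximum is attained at some $(x_0,t_0)$ with $0<t_0\le T$ and $s_1(t_0)<x_0<s_2(t_0)$. Since $x\mapsto v(x,t_0)$ has an interior maximum at $x_0$, one has $v_{xx}(x_0,t_0)\le 0$. For the time variable, if $t_0<T$ then $v_t(x_0,t_0)=0$, whereas if $t_0=T$ then $v(x_0,t)\le v(x_0,t_0)$ for $t<t_0$ forces the one-sided derivative $v_t(x_0,t_0)\ge 0$; in either case $v_t(x_0,t_0)\ge 0$. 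Hence $v_t-\tfrac12 v_{xx}\ge 0$ at $(x_0,t_0)$, contradicting $v_t-\tfrac12 v_{xx}=-\epsilon<0$. Therefore $\max_{E_T\cup B_T}v=\max_{B_T}v$.

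The conclusion then comes from letting $\epsilon\to 0$. For every $(x,t)\in E_T\cup B_T$,
\[
u(x,t)-\epsilon t=v(x,t)\le\max_{B_T}v\le\max_{B_T}u,
\]
and sending $\epsilon\to 0$ gives $u(x,t)\le\max_{B_T}u$, so $\max_{E_T\cup B_T}u\le\max_{B_T}u$ and the maximum principle follows. The minimum principle is immediate upon applying the maximum principle to $-u$, using $\min u=-\max(-u)$ together with $\max_{B_T}(-u)=-\min_{B_T}u$.

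The only point that genuinely requires care, and the one I would flag as the main (mild) obstacle, is the behaviour at the top edge $t=T$, which belongs to $E_T$ here and where only a one-sided time derivative is available; a naive argument expecting $v_t=0$ at an interior-in-space maximum fails there. The strict-subsolution perturbation is exactly what handles this, since it only needs the inequality $v_t\ge 0$ rather than an equality, so beyond this standard subtlety I do not anticipate any serious difficulty.
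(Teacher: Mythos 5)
Your proof is correct: the reduction to one inequality, the observation that $E_T\cup B_T$ is the compact set $\{s_1(t)\le x\le s_2(t),\,0\le t\le T\}$, the strict subsolution $v=u-\epsilon t$ with the one-sided derivative at $t=T$, and the passage $\epsilon\to 0$ are all sound. The paper does not prove this lemma at all — it is imported directly from Cannon's book — and your perturbation argument is precisely the standard proof given there, so there is nothing further to compare.
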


Before going to the proof of $\textbf{Theorem~\ref{thm1}}$, we need the following proposition.

\begin{prop}
\label{3456}
Let $X$ be a H\"{o}lder continuous curve on any finite interval in $[0,\infty)$ with exponent $\gamma\in[1/2,1]$. If the starting point of the Brownian motion is close to $X$, the first hitting time converges to 0. Precisely, $\displaystyle\lim_{\xi\rightarrow X_{0}}P_{\xi}\left[\tau_{\xi}^{X}> s\right]=0$ for all $s>0$.
\end{prop}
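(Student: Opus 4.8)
The plan is to localise near the initial time and then to show that a standard Brownian motion cannot remain below the H\"older barrier $Ct^\gamma$ as $t\downarrow0$ — a fact that hinges on $\gamma\ge1/2$ through the law of the iterated logarithm. Throughout I write $\xi\uparrow X_0$, which is the only relevant limit since $\tau_\xi^X$ is defined for $\xi<X_0$.

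First I would exploit the local H\"older continuity of $X$ at $0$: there are $\delta>0$ and $C>0$ with $|X_t-X_0|\le Ct^\gamma$ for all $t\in[0,\delta]$. Fix $s>0$ and set $T=\min(s,\delta)\le s$. Since staying below the boundary for a longer time is harder, $\{\tau_\xi^X>s\}\subseteq\{\tau_\xi^X>T\}$, and on $[0,T]\subseteq[0,\delta]$ we have $X_t\le X_0+Ct^\gamma$, so $B_t<X_t$ forces $B_t<X_0+Ct^\gamma$. Using that (by continuity of $B$ and $X$) the event $\{\tau_\xi^X>T\}$ coincides with $\{B_t<X_t\ \text{for all}\ t\in[0,T]\}$, I obtain
\[
P_\xi[\tau_\xi^X>s]\le P_\xi\big[B_t<X_0+Ct^\gamma\ \text{for all}\ t\in[0,T]\big].
\]

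Next, writing $B_t=\xi+W_t$ under $P_\xi$ with $W$ a standard Brownian motion started at $0$, and putting $\epsilon=X_0-\xi>0$, the right-hand side equals $\P[A_\epsilon]$ where $A_\epsilon=\{W_t<\epsilon+Ct^\gamma\ \forall t\in[0,T]\}$. The events $A_\epsilon$ decrease as $\epsilon\downarrow0$, and $\bigcap_{\epsilon>0}A_\epsilon=\{W_t\le Ct^\gamma\ \forall t\in[0,T]\}$, so continuity of the measure from above gives $\P[A_\epsilon]\downarrow\P[W_t\le Ct^\gamma\ \forall t\in[0,T]]$ as $\epsilon\downarrow0$. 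Thus it suffices to prove that this limiting probability is $0$, after which $P_\xi[\tau_\xi^X>s]\le\P[A_{X_0-\xi}]\to0$ as $\xi\uparrow X_0$ yields the claim.

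The crux — and the step I expect to be the main obstacle — is showing $\P[W_t\le Ct^\gamma\ \forall t\in[0,T]]=0$, which is exactly where $\gamma\ge1/2$ is indispensable. Here I would invoke the law of the iterated logarithm at the origin, $\limsup_{t\downarrow0}W_t/\sqrt{2t\log\log(1/t)}=1$ almost surely; this forces $\limsup_{t\downarrow0}W_t/t^{1/2}=+\infty$, and since $t^\gamma\le t^{1/2}$ for $0\le t\le1$ whenever $\gamma\ge1/2$, also $\limsup_{t\downarrow0}W_t/t^\gamma=+\infty$ almost surely. Hence almost every path satisfies $W_{t_n}>Ct_n^\gamma$ along some sequence $t_n\downarrow0$, so the barrier event has probability $0$, completing the proof. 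This estimate is sharp: for $\gamma<1/2$ the barrier $Ct^\gamma$ dominates the typical $\sqrt t$ fluctuations near $0$ and the corresponding probability would be strictly positive, so the conclusion genuinely relies on the hypothesis $\gamma\ge1/2$.
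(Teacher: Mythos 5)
Your proposal is correct and follows essentially the same route as the paper: both reduce the problem to showing that a Brownian path almost surely exceeds the H\"older barrier $Ct^\gamma$ arbitrarily close to $t=0$, which follows from $\limsup_{t\downarrow 0} W_t/\sqrt{t}=+\infty$ a.s.\ (you derive this from the law of the iterated logarithm, the paper cites it directly) combined with $t^\gamma\le t^{1/2}$ for $\gamma\ge 1/2$. Your passage to the limit $\xi\uparrow X_0$ via continuity from above of the measure is, if anything, a cleaner piece of bookkeeping than the paper's argument with the path-dependent sequence $t_k$ and the function $k(\epsilon)$.
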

\begin{proof}
Without loss of generality, we may reduce this problem as the case for Brownian motion starting at $0$ and $X_0=\epsilon>0$ and let $\epsilon$ go to $0$. For $s>0$, let $\displaystyle m:=\sup_{0\leq t_1<t_2\leq s}\frac{|X_{t_2}-X_{t_1}|}{|t_2-t_1|^{\gamma}}$. Since $\displaystyle\limsup_{t\downarrow 0}\frac{B_t}{\sqrt{t}}=\infty$ a.e., for $M>m$, we have a sequence ${t_k}\downarrow0$ such that $M\sqrt{t_k}\leq B_{t_k}$ a.e. and $M\sqrt{t_k}-mt_k^{\gamma}\downarrow 0$ for all $k$. Thus, for $0<t\leq s$, we deduce that a.e.
$$\displaystyle \sup_{0\leq l\leq t}{\left\{B_l-X_l\right\}}\geq \sup_{0\leq l\leq t}{\left\{B_l-(\epsilon+ml^{\gamma})\right\}} \geq\sup_{t_k\leq t}{\left\{B_{t_k}-(\epsilon+mt_k^{\gamma})\right\}}\geq\sup_{t_k\leq t}{\left\{M\sqrt{t_k}-(\epsilon+mt_k^{\gamma})\right\}}.$$
Therefore,
$$\displaystyle P_{0}\left(\sup_{0\leq l\leq t}{\left\{B_l-X_l\right\}}<0\right)\leq P_{0}\left(\sup_{t_k\leq t}{\left\{M\sqrt{t_k}-mt_k^{\gamma}\right\}<\epsilon}\right).$$
For each sufficently small $\epsilon>0$, there is a greatest $k(\epsilon)$ such that $\displaystyle M\sqrt{t_{k(\epsilon)}}-mt_{k(\epsilon)}^{\gamma}\geq\epsilon$. Thus we obtain that $\tau_0^{X}\leq t_{k(\epsilon)}$ a.e. for all sufficiently small $\epsilon>0$. Since $k(\epsilon)$ is an increasing function as $\epsilon$ decreases and  $\displaystyle \lim_{\epsilon\rightarrow 0}k(\epsilon)=\infty$, the proposition follows.
\end{proof}

\begin{proof}\hspace{-1mm}
\textbf{of Theorem~\ref{thm1}}\\
Let us define $X_{\tau}^{\prime}:=X_{t-\tau}$ for all $0\leq\tau\leq t$. Using the invariance of the law of the Brownian motion under time reversal, we have
\begin{eqnarray}
\displaystyle u(x,t)=\int_{-\infty}^{0} h(r^{\prime})G_{0,t}^{X}(r^{\prime},x)dr^{\prime}=E_{x}[h(B_t);\tau_x^{X^{\prime}}> t].
\end{eqnarray}
Using this equality, we also have
\begin{eqnarray}
\displaystyle\left|u(x,t)\right|=\left|E_{x}[h(B_t);\tau_x^{X^{\prime}}> t]\right|\leq \lVert h\rVert_{\infty}P_{x}[\tau_x^{X^{\prime}}> t].
\end{eqnarray} 
For $s>0$, let us choose $\displaystyle 0<s^{*}<s$. Then for all $(x,t)$ sufficiently close to $(X_s,s)$, we obtain
\begin{eqnarray}
P_{x}[\tau_x^{X^{\prime}}> t]\leq P_{x}[\tau_x^{X^{\prime}}> s^{*}]
\end{eqnarray} 
which vanishes when $\displaystyle (x,t)\rightarrow (X_{s},s)$ by $\textbf{Proposition~\ref{3456}}$ so that $\displaystyle\lim_{(x,t)\rightarrow(X_s,s)}u(x,t)=0$.\\
In addition, we have
\begin{eqnarray}
\displaystyle\left|u(x,t)\right|=\left|E_{x}[h(B_t);\tau_x^{X^{\prime}}> t]\right| \leq E_{x}[ | h(B_t)|]=\int_{-\infty}^{0} |h(\xi)|G_{0,t}(x,\xi)d\xi
\end{eqnarray} 
which also vanishes when $\displaystyle (x,t)\rightarrow (0,0)$, since the support of $h$ is strictly away from $0$.
To prove that $u$ satisfies the initial data $h$, we write $\displaystyle y_t=\min_{s\in[0,t]}X_s^{\prime}$. For $x<0$ and any positive $\lambda>0$,
\begin{eqnarray}
\label{3.13}
P_{x}\left[\tau_x^{X^{\prime}}\leq t\right]\leq P_{x}\left[\max_{s\in[0,t]}B_s\geq y_t\right]= P_{x}\left[\max_{s\in[0,t]}\exp{(\lambda B_s)}\geq \exp{(\lambda y_t)}\right].
\end{eqnarray} 
Since the exponential of Brownian motion is a positive submartingale, we can apply Doob's inequality, then
\begin{eqnarray}
\label{3.14}
P_{x}\left[\max_{s\in[0,t]}\exp{(\lambda B_s)}\geq \exp{(\lambda y_t)}\right]\leq \frac{E_{x}[\exp{(\lambda B_t)}]}{\exp{(\lambda y_t)}}=\exp{\left(\frac{1}{2}\lambda^{2}t-\lambda(y_t-x)  \right)}. 
\end{eqnarray} 
From $(\ref{3.13})$ and $(\ref{3.14})$, we get
\begin{eqnarray}
\displaystyle\lim_{(x,t)\rightarrow (y,0)}P_{x}\left[\tau_x^{X^{\prime}}\leq t\right] \leq \exp{(-\lambda(X_0-y))}= \exp{(\lambda y)}
\end{eqnarray} 
so that the left hand side vanishes since $\lambda > 0$ is arbitrary and we are assuming $y < X_0 = 0$.
Thus we obtain that
\begin{eqnarray}
\displaystyle \lim_{(x,t)\rightarrow (y,0)}\int_{-\infty}^{0}h(\xi)G_{0,t}^{X}(\xi,x)d\xi=\lim_{(x,t)\rightarrow (y,0)}E_{x}[h(B_t)]=h(y).
\end{eqnarray} 
By the properties of the Gaussian kernel, we deduce that $u$ solves $(\ref{4242})$. We now prove uniqueness. If $v_1$, $v_2$ satisfy all the above conditions, then $\displaystyle v_1-v_2\in C(\overline{D_T})\cap C^{2,1}(D_T)$ satisfies the heat equation with the initial data $0$ and is also $0$ on the boundary. Moreover, $\displaystyle \lim_{x\rightarrow -\infty}\sup_{0< t< T}|v_1(x,t)-v_2(x,t)|=0$ so that by the weak maximum(minimum) principle, $v_1-v_2$ is all $0$ in $D_T$. Therefore, $v_1=v_2$ so that item $\ref{thm1.1}$ is proved.\\
To prove item $\ref{thm1.2}$, assuming that $\gamma\in(1/2,1]$, we define, $(x,t)\in D_T$,
\begin{eqnarray}
 \label{435}
 \displaystyle v(x,t):=\int_{-\infty}^{0}h(\xi)G(x,t;\xi,0)d\xi+\int_{0}^{t}G(x,t;X_\tau,\tau)\varphi(\tau)d\tau,
 \end{eqnarray}
where $\displaystyle \varphi\in C_{(\gamma)}^{0}\left((0,T]\right)$ satisfies
\begin{eqnarray}
   \label{11}   
      0=\int_{-\infty}^{0}h(\xi)G(X_t,t;\xi,0)d\xi+\int_{0}^{t}G(X_t,t;X_\tau,\tau)\varphi(\tau)d\tau.
 \end{eqnarray}
If we apply the Abel inverse operator $A$ defined by
$$\displaystyle (AF)(t)=\frac{1}{\pi}\frac{d}{dt}\int_{0}^{t}\frac{F(\eta)}{(t-\eta)^{\frac{1}{2}}}d\eta$$
on both sides of (\ref{11}), then from Chapter 14 in $\cite{Cannon}$ we have a equivalent Volterra integral equation of the second kind:
 \begin{eqnarray}
 \label{434}
   \varphi(t)=\psi(t) + \int_{0}^{t}H(t,\tau)\varphi(\tau)d\tau,
 \end{eqnarray}
where $\displaystyle \psi\in C_{(\gamma)}^{0}\left((0,T]\right)$ and $|H(t,\tau)|\leq C(t-\tau)^{2\gamma-2}$. The existence of $\displaystyle \varphi\in C_{(\gamma)}^{0}\left((0,T]\right)$ which satisfies $(\ref{434})$ can be proved similarly to the proof of $\textbf{Proposition~\ref{100}}$ that we will show later. Then $v$ is well-defined and solves $(\ref{4242})$ so that $v=u$ since $u$ is the unique solution of $(\ref{4242})$. By the jump relation, we have 
\begin{eqnarray}
 \displaystyle u_x(X_t^{-},t)=\int_{-\infty}^{0}h(\xi)G_x(X_t,t;\xi,0)d\xi+\varphi(t)+\int_{0}^{t}G_x(X_t,t;X_\tau,\tau)\varphi(\tau)d\tau
 \end{eqnarray}
so that $\displaystyle u_x(X_t^{-},t)\in C_{(\gamma)}^{0}((0,T])$. Since $T$ is arbitrary, we deduce $\displaystyle u_x(X_t^{-},t)$ is continuous on $(0,\infty)$.\\
To show $(\ref{888})$, let us fix $(x,t)\in D_T$ and let us define $\displaystyle D_{\epsilon,r}^{(t)}:=\{(\xi,\tau): r<\xi<X_\tau-\epsilon,\ \epsilon<\tau<t-\epsilon\}$ for each $\epsilon>0$ and $r\in\mathbb{R}$. By the Green's identity, we have
\begin{eqnarray}
 \displaystyle \frac{1}{2}(u_\xi G-uG_\xi)_\xi-(uG)_{\tau}=0\ \Longrightarrow\  \ \oint_{\partial D_{\epsilon,r}^{(t)}} \frac{1}{2}(u_\xi G-uG_\xi)d\tau+(uG)d\xi=0.
\end{eqnarray}
It  can be also shown that $\displaystyle \lim_{x\rightarrow -\infty}\sup_{0< t< T}|u_x(x,t)|=0$ by the properties of the Gaussian kernel. Hence we obtain another representation of $u$ by letting $\epsilon\rightarrow 0$, $r\rightarrow -\infty$,
\begin{eqnarray}
\label{777}
\displaystyle u(x,t)=\int_{-\infty}^{0}h(\xi)G(x,t;\xi,0)d\xi+\frac{1}{2}\int_{0}^{t}G(x,t;X_\tau,\tau)u_x(X_\tau^{-},\tau)d\tau.
\end{eqnarray}
Differentiating both sides of $(\ref{777})$ with respect to x  and applying the jump relation, we get
\begin{eqnarray}
\displaystyle \frac{1}{2}u_x(X_t^{-},t)=\int_{-\infty}^{0}h(\xi)G_x(X_t,t;\xi,0)d\xi+\frac{1}{2}\int_{0}^{t}G_x(X_t,t;X_\tau,\tau)u_x(X_\tau^{-},\tau)d\tau.
\end{eqnarray}
which implies $(\ref{888})$.
\end{proof}

\vskip1cm
\section{Proof of \textbf{of Theorem~\ref{mainthm}}} 
\label{Fsec.2}
To prove item $\ref{1.1}$, let $X$ be a continuous curve defined on $[0,\infty)$ and let $r_0<X_0$. Using the strong Markov property, we have, for $\displaystyle f\in C_c^{\infty}(\mathbb{R})$ with $\displaystyle\textrm{supp}\hspace{0.5mm}f\Subset (-\infty,X_t)$ and $0\leq s\leq t$,
\begin{eqnarray}
\displaystyle E_{r_0}[f(B_t)|\tau_{r_0}^{X}= s]=E_{X_{s},s}[f(B_{t})].
\end{eqnarray}
Thus we get
\begin{eqnarray}
\displaystyle E_{r_0}[f(B_t);\tau_{r_0}^{X}\leq t]=\int_{[0,t]}F_{r_0}^{X}(ds)E_{X_{s},s}[f(B_{t})].
\end{eqnarray}
It follows that item $\ref{1.1}$ 
 holds with $\displaystyle G_{0,t}^{X}(r_0,x)$ given by $(\ref{12345})$.\\
To prove item $\ref{1.2}$, from now on, $X$ is a H\"{o}lder continuous curve defined on $[0,\infty)$ with exponent $\gamma\in(1/2,1]$ and $X_0=0$. Comparing the definition \eqref{2345} of $u$ and $(\ref{777})$, using \eqref{12345}, we see the following equality:
\begin{eqnarray}
\displaystyle \int_{[0,t)}G_{\tau,t}(X_\tau,x)\int_{-\infty}^{0}h(\xi)F_{\xi}^{X}(d\tau)d\xi=-\frac{1}{2}\int_{0}^{t}G(x,t;X_\tau,\tau)u_x(X_\tau^{-},\tau)d\tau.
\end{eqnarray}
Set $\displaystyle F_{h}^{X}(d\tau):=\int_{-\infty}^{0} h(\xi) F_{\xi}^{X}(d\tau)d\xi$. 
\begin{prop}
\label{new}
$\displaystyle F_{h}^{X}(d\tau)=-\frac{1}{2}u_x(X_\tau^{-},\tau)d\tau$.
\end{prop}
For the proof of Proposition \ref{new}, we introduce the mass lost $\displaystyle \Delta_{I}^{X}(u)$, $I=[t_1,t_2]\subset [0,T]$, $t_1\leq t_2$, is defined by
\begin{eqnarray}
\displaystyle \Delta_{I}^{X}(u)=\int_{-\infty}^{X_{t_1}}u(r,t_1)dr-\int_{-\infty}^{X_{t_2}}u(r,t_2)dr.
\end{eqnarray}
If we see the right hand side of $(\ref{777})$, we can extend $u$ to $\bar{u}$ defined in $\{(x,t): x\in\mathbb{R},\ 0<t\leq T\}$ as 
\begin{eqnarray}
\displaystyle \bar{u}(x,t)=\int_{-\infty}^{0}h(\xi)G(x,t;\xi,0)d\xi+\frac{1}{2}\int_{0}^{t}G(x,t;X_\tau,\tau)u_x(X_\tau^{-},\tau)d\tau.
\end{eqnarray}
Then this satisfies the heat equation with $\displaystyle \lim_{(x,t)\rightarrow (y,0)}\bar{u}(x,t)=0$ for all $y\geq 0$ and also satisfies $\bar{u}(X_t,t)=0$ for all $0<t\leq T$. Moreover, by the properties of Gaussian kernel, we have 
\begin{eqnarray}
\displaystyle \lim_{x\rightarrow \infty}\sup_{0< t< T}|\bar{u}(x,t)|=0.
\end{eqnarray}
It follows that $\bar{u}(x,t)=0$ in $\{(x,t): x\geq X_t, 0< t\leq T\}$ by the weak maximum(minimum) principle. Thus we assume that $u$ is defined $\{(x,t): x\in\mathbb{R},\ 0\leq t\leq T\}$ such that it is $0$ in $\{(x,t): x\geq X_t,\ 0\leq t\leq T\}$.
\\ 
Heuristically
\begin{eqnarray*}
\displaystyle \Delta_{I}^{X}(u)= -\int \int_{t_1}^{t_2} u_t(x,t)dtdx= -\int \int_{t_1}^{t_2}\frac{1}{2}u_{xx}(x,t)dxdt= -\frac{1}{2}\int_{t_1}^{t_2} u_x(X_t^{-},t)dt.
\end{eqnarray*}
Since we do not control $u_{xx}$ at the moving boundary, we cannot make this argument rigorously. Thus we use a different approach.\\

\begin{proof}\hspace{-1mm}
\textbf{of Proposition~\ref{new}}
\vskip 0.1cm
It suffices to show
$$\displaystyle -\frac{1}{2}\int_{I}u_x(X_t^{-},t)dt=\Delta_{I}^{X}(u)=F_h^{X}(I).$$
If we integrate both sides of $(\ref{777})$, then
\begin{eqnarray*}
\displaystyle \int_{-\infty}^{\infty}u(x,t)dx= \int_{-\infty}^{\infty}\int_{-\infty}^{0}h(\xi)G(x,t;\xi,0)d\xi dx+ \int_{-\infty}^{\infty}\frac{1}{2}\int_{0}^{t}G(x,t;X_\tau,\tau)u_x(X_\tau^{-},\tau)d\tau dx.
\end{eqnarray*}
Applying Fubini's theorem, we get
\begin{eqnarray}
\displaystyle   \int_{-\infty}^{X_t}u(x,t)dx=\int_{-\infty}^{\infty}u(x,t)dx= \int_{-\infty}^{0}h(\xi)d\xi+\frac{1}{2}\int_{0}^{t}u_x(X_\tau^{-},\tau)d\tau .
\end{eqnarray}
Thus we get the first equality of the proposition,
\begin{eqnarray}
\displaystyle \Delta_{[t_1,t_2]}^{X}(u)=-\frac{1}{2}\int_{t_1}^{t_2}u_x(X_\tau^{-},\tau)d\tau.
\end{eqnarray}

From $(\ref{1.0})$ and item $\ref{1.1}$ of $\textbf{Theorem~\ref{mainthm}}$, we get
\begin{eqnarray}
\displaystyle P_{\xi}[\tau_\xi^{X} > t]=\int_{-\infty}^{X_t}G_{0,t}^{X}(\xi,x)dx.
\end{eqnarray}
For $0=t_1<t_2$, using Fubini's theorem again, we get
\begin{eqnarray*}
\displaystyle \Delta_{I}^{X}(u)&=&\int_{-\infty}^{0}h(\xi)d\xi-\int_{-\infty}^{X_{t_2}}\int_{-\infty}^{0} h(\xi)G_{0,t_2}^{X}(\xi,x)d\xi dx\\
&=&\int_{-\infty}^{0}h(\xi)d\xi-\int_{-\infty}^{0}h(\xi)P_{\xi}[\tau_\xi^{X} > t_{2}]d\xi=\int_{-\infty}^{0}h(\xi)P_{\xi}[0\leq\tau_\xi^{X} \leq t_{2}]d\xi.\\
\end{eqnarray*}
For $0<t_1<t_2$, similarly,
\begin{eqnarray*}
\displaystyle \Delta_{I}^{X}(u)&=&\int_{-\infty}^{X_{t_1}}\int_{-\infty}^{0} h(\xi)G_{0,t_1}^{X}(\xi,x)d\xi dx-\int_{-\infty}^{X_{t_2}}\int_{-\infty}^{0} h(\xi)G_{0,t_2}^{X}(\xi,x)d\xi dx\\
&=&\int_{-\infty}^{0}h(\xi)P_{\xi}[t_1< \tau_\xi^{X} \leq t_{2}]d\xi.
\end{eqnarray*}
Then for $I_\epsilon=[t_1-\epsilon, t_1]$, we get
\begin{eqnarray*}
\displaystyle \lim_{\epsilon\rightarrow 0}\Delta_{I_\epsilon}^{X}(u)&=&\lim_{\epsilon\rightarrow 0}-\frac{1}{2}\int_{I_\epsilon}u_x(X_t^{-},t)dt=0=\lim_{\epsilon\rightarrow 0}\int_{-\infty}^{0}h(\xi)P_{\xi}[t_1-\epsilon< \tau_\xi^{X} \leq t_1]d\xi\\
&=&\int_{-\infty}^{0}h(\xi)P_{\xi}[\tau_\xi^{X} = t_{1}]d\xi.
\end{eqnarray*}
Finally we conclude that
\begin{eqnarray}
\displaystyle \Delta_{I}^{X}(u)=\int_{-\infty}^{0}h(\xi)P_{\xi}[t_1\leq \tau_\xi^{X}\leq t_2]d\xi=F_h^{X}(I).
\end{eqnarray}
\end{proof}



By approximating the initial delta measure of $\textbf{Theorem~\ref{mainthm}}$, we prove the proposition below.

\begin{prop}
\label{890}
Let $X$ be a H\"{o}lder continuous curve on any finite interval in $[0,\infty)$ with exponent $\gamma\in(1/2,1]$ and let $X_0=0$. We fix $r_0<X_0=0$ and choose a sequence $\displaystyle\{h_n\}\subset C_c^{\infty}(\mathbb{R};\mathbb{R}_{+})$ with $\textrm{supp}\hspace{0.1cm}{h_n}=[r_0-\frac{1}{n},r_0+\frac{1}{n}]\Subset (-\infty,0)$ and $\displaystyle \lVert h_n \rVert_{1}=1$. For each $h_n$, there exists a corresponding solution $\displaystyle u_{n}$ with $\displaystyle -\frac{1}{2}\frac{\partial u_n}{\partial x}\Big|_{(X_{t}^{-},t)}=:p_{n}(t)$ in the sense of $\textbf{Theorem~\ref{thm1}}$. Then we have the following statements:
\begin{enumerate}
\label{100} 
\item There is a unique $p\in C([0,\infty))$ with $\displaystyle p(0)=0$ such that
\begin{eqnarray}
\label{111}
\displaystyle p(t)=-G_x(X_t,t;r_0,0)+\int_0^{t}G_x(X_t,t;X_\tau,\tau)p(\tau)d\tau\ for\ all\ t>0.
\end{eqnarray}
\label{222}
\item $p_n$ converges to $p$ in $\displaystyle C_{(\eta)}^{0}((0,T])$ for all $0<\eta<1/2$.
\end{enumerate}
\end{prop}
\vskip 0.3cm

We use the following lemma extensively to prove $\textbf{Proposition~\ref{890}}$:
\begin{lem}
\label{lem1}
$\displaystyle \int_0^{t}\tau^{\alpha_1}(t-\tau)^{\alpha_2}d\tau=\frac{\Gamma(1+\alpha_1)\Gamma(1+\alpha_2)}{\Gamma(2+\alpha_1+\alpha_2)}t^{1+\alpha_1+\alpha_2}$ for $\alpha_1, \alpha_2>-1$, where $\Gamma$ is the gamma function.
\end{lem}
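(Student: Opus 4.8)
The plan is to reduce the integral to the classical Euler Beta function by a rescaling, and then to invoke the Beta--Gamma relation. First I would substitute $\tau = ts$, so that $d\tau = t\,ds$ and the limits $\tau = 0, t$ become $s = 0, 1$. This pulls every factor of $t$ outside the integral:
\begin{equation*}
\int_0^{t}\tau^{\alpha_1}(t-\tau)^{\alpha_2}\,d\tau = t^{1+\alpha_1+\alpha_2}\int_0^{1}s^{\alpha_1}(1-s)^{\alpha_2}\,ds.
\end{equation*}
The hypothesis $\alpha_1,\alpha_2>-1$ is exactly what guarantees that the remaining integral converges: near $s=0$ the integrand behaves like $s^{\alpha_1}$ and near $s=1$ like $(1-s)^{\alpha_2}$, each of which is integrable precisely when the exponent exceeds $-1$.

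The next step is to recognize $\int_0^{1}s^{\alpha_1}(1-s)^{\alpha_2}\,ds$ as the Beta function $B(1+\alpha_1,1+\alpha_2)$ and to apply the identity $B(a,b)=\Gamma(a)\Gamma(b)/\Gamma(a+b)$ with $a=1+\alpha_1$ and $b=1+\alpha_2$. To keep the argument self-contained I would derive this relation from the product representation
\begin{equation*}
\Gamma(a)\Gamma(b)=\int_0^{\infty}\!\!\int_0^{\infty}x^{a-1}y^{b-1}e^{-(x+y)}\,dx\,dy,
\end{equation*}
passing to the coordinates $x=ru$, $y=r(1-u)$ with $r\in(0,\infty)$, $u\in(0,1)$ and Jacobian $r$. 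The double integral then factors as $\Gamma(a+b)\,B(a,b)$, which gives the Beta--Gamma relation; substituting it back produces exactly the claimed closed form.

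I do not anticipate any genuine obstacle, since this is the standard Euler integral identity. The only point deserving a word of care is the integrability at the two endpoints of $[0,1]$, and this is precisely secured by the assumption $\alpha_1,\alpha_2>-1$. As this lemma functions only as a computational device for the later estimates on the Volterra kernel $H(t,\tau)$, it would be equally acceptable to cite the Beta--Gamma relation directly rather than re-deriving it.
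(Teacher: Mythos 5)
Your proposal is correct: the substitution $\tau = ts$ reduces the integral to $t^{1+\alpha_1+\alpha_2}B(1+\alpha_1,1+\alpha_2)$, and the Beta--Gamma relation gives the stated closed form, with the hypothesis $\alpha_1,\alpha_2>-1$ ensuring convergence at both endpoints. The paper states this lemma without proof, treating it as the standard Euler integral identity, so your argument supplies exactly the expected (and only reasonable) derivation; citing the Beta--Gamma relation directly would indeed have sufficed.
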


\vskip 0.5cm
\begin{proof}\hspace{-1mm}
\textbf{of Proposition~\ref{890}}\\
Let $T_s>0$. Since $\displaystyle |G_x(X_t,t;X_\tau,\tau)|=\left|\frac{1}{\sqrt{2\pi (t-\tau)}}\left\{-\frac{X_{t}-X_{\tau}}{t-\tau}\right\}\exp{\left(-\frac{(X_t-X_\tau)^{2}}{2(t-\tau)}\right)}\right|\leq\frac{C_0}{(t-\tau)^{\frac{3}{2}-\gamma }}$, we deduce that for $\displaystyle q_1, q_2\in C([0,T_s])$, $0<t\leq T_s$,
\begin{eqnarray*}
\displaystyle \left|\int_0^{t}G_x(X_t,t;X_\tau,\tau)(q_1(\tau)-q_2(\tau))d\tau\right|\leq \int_0^{t}\frac{C_0\lVert q_1-q_2\rVert_{T_s}}{(t-\tau)^{\frac{3}{2}-\gamma}}d\tau\\
=C_1t^{\gamma-\frac{1}{2}}\lVert q_1-q_2\rVert_{T_s}\leq C_1T_s^{\gamma-\frac{1}{2}}\lVert q_1-q_2\rVert_{T_s}.
\end{eqnarray*}
We define $\displaystyle F:C([0,T_s])\rightarrow C([0,T_s])$ as, for $q\in C([0,T_s])$,
\begin{eqnarray}
\displaystyle (Fq)(t)=-G_x(X_t,t;r_0,0)+\int_0^{t}G_x(X_t,t;X_\tau,\tau)q(\tau)d\tau\ for\ t>0,
\end{eqnarray}
and  $(Fq)(0)=0$. Since $\displaystyle\lim_{t\rightarrow 0}G_x(X_t,t;r_0,0)=0$, it is well-defined. If we choose $T_s$ such that $\displaystyle C_1T_s^{\gamma-\frac{1}{2}}<1$, then $F$ is a contraction mapping so that $F$ has a unique fixed point since $C([0,T_s])$ is a Banach space. Let's call this $\displaystyle p_{T_s}$.\\
Now we assume that we have $p_{T_s}$ for some $T_s>0$. For $\displaystyle T^{\star}>T_s$, we define $\displaystyle H:C([T_s,T^{\star}])\rightarrow C([T_s,T^{\star}])$ as, for $q\in C([T_s,T^{\star}])$,
\begin{eqnarray*}
\displaystyle (Hq)(t)=-G_x(X_t,t;r_0,0)+\int_0^{T_s}G_x(X_t,t;X_\tau,\tau)p_{T_s}(\tau)d\tau+  \int_{T_s}^{t}G_x(X_t,t;X_\tau,\tau)q(\tau)d\tau.
\end{eqnarray*}
Then for $\displaystyle q_1, q_2\in C([T_s,T^{\star}])$, we have
\begin{eqnarray}
\displaystyle \lVert Hq_1-Hq_2\rVert_{\infty}\leq C_2(t-T_s)^{\gamma-\frac{1}{2}}\lVert q_1-q_2\rVert_{\infty}\leq C_2(T^{\star}-T_s)^{\gamma-\frac{1}{2}}\lVert q_1-q_2\rVert_{\infty}.
\end{eqnarray}
Similarly, if we choose $\displaystyle T^{\star}$ such that $\displaystyle  C_2(T^{\star}-T_s)^{\gamma-\frac{1}{2}}<1$, then $H$ is a contraction mapping so that $H$ has a unique fixed point since $\displaystyle C([T_s,T^{\star}])$ is a Banach space.\\
Therefore, if we have $p$ defined $[0,T_s]$, $p_{T_s}$,  then we can extend this to time $T_s+C_3$ where $C_3$ is an independent constant. Thus if we repeat this step inductively, we have $p$ defined on $[0,\infty)$ which satisfies $(\ref{111})$.
\\
We now prove that $p_n$ converges to $p$ in $\displaystyle C_{(\eta)}^{0}((0,T_s])$ for all sufficiently small $T_s>0$. By $(\ref{888})$, 
\begin{eqnarray}
\label{999}
\displaystyle p_n(t)=-\int_{-\infty}^{0}h_n(\xi)G_x(X_t,t;\xi,0)d\xi+\int_{0}^{t}G_x(X_t,t;X_\tau,\tau)p_n(\tau)d\tau.
\end{eqnarray}
For $0<t\leq T_s$, taking the difference between $(\ref{111})$ and $(\ref{999})$, we get
\begin{eqnarray*}
t^{1-\eta}|p_n(t)-p(t)|&\leq& t^{1-\eta}\left|\int_{-\infty}^{0}h_n(\xi)[G_x(X_t,t;\xi,0)-G_x(X_t,t;r_0,0)]d\xi\right|\\
&+&t^{1-\eta}\left|\int_{0}^{t}G_x(X_t,t;X_\tau,\tau)(p_n(\tau)-p(\tau))d\tau\right|.
\end{eqnarray*}
For the second term of the right hand side, we have
\begin{eqnarray*}
t^{1-\eta}\left|\int_{0}^{t}G_x(X_t,t;X_\tau,\tau)(p_n(\tau)-p(\tau))d\tau\right|\leq C_0t^{1-\eta}\int_{0}^{t}\frac{\lVert p_n-p\rVert_{T_s}^{(\eta)}}{(t-\tau)^{\frac{3}{2}-\gamma}\tau^{1-\eta}}d\tau\\
=C_1t^{\gamma-\frac{1}{2}}\lVert p_n-p\rVert_{T_s}^{(\eta)}\leq C_1T_s^{\gamma-\frac{1}{2}}\lVert p_n-p\rVert_{T_s}^{(\eta)}.
\end{eqnarray*}
Let us choose $T_s>0$ such that $C_1T_s^{\gamma-\frac{1}{2}}<1$. Then
\begin{eqnarray*}
\displaystyle (1-C_1T_s^{\gamma-\frac{1}{2}})\lVert p_n-p\rVert_{T_s}^{(\eta)}\leq t^{1-\eta}\left|\int_{-\infty}^{0}h_n(\xi)[G_x(X_t,t;\xi,0)-G_x(X_t,t;r_0,0)]d\xi\right|.\\
\displaystyle\leq\int_{-\infty}^{0}h_n(\xi)\sup_{\substack{0<t\leq T_s\\ \xi\in[r_0-\frac{1}{n},r_0+\frac{1}{n}]}}t^{1-\eta}\left|G_x(X_t,t;\xi,0)-G_x(X_t,t;r_0,0)\right|d\xi.
\end{eqnarray*}

For all sufficiently large $n$, $0<t\leq\frac{1}{n}$ and $\displaystyle r_0-\frac{1}{n}\leq\xi\leq r_0+\frac{1}{n}$, there exists $C_2$ such that 
\begin{eqnarray*}
\left|G_x(X_t,t;\xi,0)\right|= \left|\frac{1}{\sqrt{2\pi}}\left\{-\frac{X_t-\xi}{t^{\frac{3}{2}}}\right\}\exp{\left(-\frac{(X_t-\xi)^{2}}{2t}\right)}\right|\leq C_{2},
\end{eqnarray*}
Thus
\begin{eqnarray*}
\sup_{\substack{0<t\leq \frac{1}{n}\\ \xi\in[r_0-\frac{1}{n},r_0+\frac{1}{n}]}}t^{1-\eta}\left|G_x(X_t,t;\xi,0)-G_x(X_t,t;r_0,0)\right|\leq \sup_{0<t\leq \frac{1}{n}}2t^{1-\eta}C_{2}\leq \frac{2C_2}{n^{1-\eta}}.
\end{eqnarray*}

For all sufficiently large $n$, $\frac{1}{n}<t\leq T_s$ and $\displaystyle r_0-\frac{1}{n}\leq\xi\leq r_0+\frac{1}{n}$, since $\displaystyle|G_{xx}(\cdot,t;\cdot, 0)|\leq\frac{C_3}{t^{\frac{3}{2}}}$, we have
\begin{eqnarray*}
\left|G_x(X_t,t;\xi,0)-G_x(X_t,t;r_0,0)\right| \leq \frac{C_3|\xi-r_0|}{t^{\frac{3}{2}}}\leq\frac{C_3}{nt^{\frac{3}{2}}},
\end{eqnarray*}
Thus
\begin{eqnarray*}
\sup_{\substack{\frac{1}{n}<t\leq T_s\\ \xi\in[r_0-\frac{1}{n},r_0+\frac{1}{n}]}}t^{1-\eta}\left|G_x(X_t,t;\xi,0)-G_x(X_t,t;r_0,0)\right|\leq \sup_{\frac{1}{n}<t\leq T_s}\frac{C_3}{nt^{\frac{1}{2}+\eta}}\leq\frac{C_3}{n^{\frac{1}{2}-\eta}}.
\end{eqnarray*}
Therefore, we conclude that $p_n$ converges to $p$ in $\displaystyle C_{(\eta)}^{0}((0,T_s])$ for all sufficiently small $T_s>0$.\\
To extend from $T_s$ to $T^{\star}$, assuming that  $p_n$ converges to $p$ in $\displaystyle C_{(\eta)}^{0}((0,T_s])$ for some $T_s>0$ and writing $\displaystyle \lVert p_n-p\rVert_{[T_s,T^{\star}]}=\displaystyle\sup_{\tau\in[T_s,T^{\star}]} \left|p_n(\tau)-p(\tau)\right|$, for $\displaystyle T_s\leq t\leq T^{\star}$, we deduce that
\begin{eqnarray*}
|p_n(t)-p(t)|&\leq& \left|\int_{-\infty}^{0}h_n(\xi)[G_x(X_t,t;\xi,0)-G_x(X_t,t;r_0,0)]d\xi\right|\\
&+&\left|\int_{0}^{T_s}G_x(X_t,t;X_\tau,\tau)(p_n(\tau)-p(\tau))d\tau\right|+\left|\int_{T_s}^{t}G_x(X_t,t;X_\tau,\tau)(p_n(\tau)-p(\tau))d\tau\right|\\
&\leq& C_3\int_{-\infty}^{0}h_n(\xi)\frac{|\xi-r_0|}{t^{\frac{3}{2}}}d\xi+C_0\int_{0}^{T_s}\frac{\lVert p_n-p\rVert_{T_s}^{(\eta)}}{(t-\tau)^{\frac{3}{2}-\gamma}\tau^{1-\eta}}d\tau+C_0\int_{T_s}^{t}\frac{\lVert p_n-p\rVert_{[T_s,T^{\star}]}}{(t-\tau)^{\frac{3}{2}-\gamma}}d\tau\\
&\leq& \frac{C_3}{nt^{\frac{3}{2}}}+C_0\int_{0}^{T_s}\frac{\lVert p_n-p\rVert_{T_s}^{(\eta)}}{(T_s-\tau)^{\frac{3}{2}-\gamma}\tau^{1-\eta}}d\tau+C_4\lVert p_n-p\rVert_{[T_s,T^{\star}]}(t-T_s)^{\gamma-\frac{1}{2}}\\
&\leq&\frac{C_3}{nT_s^{\frac{3}{2}}} +C_1T_s^{\gamma-\frac{3}{2}+\eta}\lVert p_n-p\rVert_{T_s}^{(\eta)}+C_4\lVert p_n-p\rVert_{[T_s,T^{\star}]}(T^{\star}-T_s)^{\gamma-\frac{1}{2}}.
\end{eqnarray*}
Let us choose $T^{\star}>T_s$ such that $C_4(T^{\star}-T_s)^{\gamma-\frac{1}{2}}<1$, then we have
\begin{eqnarray*}
(1-C_4(T^{\star}-T_s)^{\gamma-\frac{1}{2}})\lVert p_n-p\rVert_{[T_s,T^{\star}]}\leq \frac{C_3}{nT_s^{\frac{3}{2}}}+C_1T_s^{\gamma-\frac{3}{2}+\eta}\lVert p_n-p\rVert_{T_s}^{(\eta)}.
\end{eqnarray*}
The right term vanishes when $n$ goes to $\infty$ so that $p_n$ converges to $p$ in $\displaystyle C_{(\eta)}^{0}((0,T_s+C_5])$ for some independent constant $C_5>0$. By repeating this argument inductively, it follows that $p_n$ converges to $p$ in $\displaystyle C_{(\eta)}^{0}((0,T])$.
\end{proof}

\vskip 0.5cm
Now we can prove $p$ from Proposition \ref{890} is the density function of $\displaystyle F_{r_0}^{X}$.\\
Hence we have
\begin{eqnarray*}
\displaystyle \lim_{n\rightarrow \infty}\int_{-\infty}^{0}h_{n}(\xi)P_{\xi}(\tau_\xi^{X}\in I)d\xi=\lim_{n\rightarrow \infty}\int_{I}p_n(t)dt=\int_{I}p(t)dt.
\end{eqnarray*}
For $I=[0,t]\subset[0,T]$, $\displaystyle P_{\xi}(\tau_\xi^{X}\in I)$ is an increasing function of $\xi$, so if we choose $h_n$ so that $\displaystyle\lim_{n\rightarrow\infty}\int_{r_0}^{r_0+\frac{1}{n}}h_n(\xi)d\xi=1$, we get
\begin{eqnarray}
\displaystyle \lim_{\xi\rightarrow r_0^{+}}P_{\xi}(\tau_\xi^{X}\in I)\leq \lim_{n\rightarrow \infty}\int_{r_0}^{r_0+\frac{1}{n}}h_{n}(\xi)P_{\xi}(\tau_\xi^{X}\in I)d\xi=\lim_{n\rightarrow \infty}\int_{-\infty}^{0}h_{n}(\xi)P_{\xi}(\tau_\xi^{X}\in I)d\xi.
\end{eqnarray}
Similarly, if we choose $h_n$ so that $\displaystyle\lim_{n\rightarrow\infty}\int_{r_0-\frac{1}{n}}^{r_0}h_n(\xi)d\xi=1$, we get
\begin{eqnarray}
\displaystyle  \lim_{n\rightarrow \infty}\int_{-\infty}^{0}h_{n}(\xi)P_{\xi}(\tau_\xi^{X}\in I)d\xi=\lim_{n\rightarrow \infty}\int_{r_0-\frac{1}{n}}^{r_0}h_{n}(\xi)P_{\xi}(\tau_\xi^{X}\in I)d\xi\leq \lim_{\xi\rightarrow r_0^{-}}P_{\xi}(\tau_\xi^{X}\in I).
\end{eqnarray}
Therefore, we obtain that
\begin{eqnarray}
\displaystyle F_{r_0}^{X}([0,t])=P_{r_0}(\tau_{r_0}^{X}\leq t)=\int_{0}^{t}p(\tau)d\tau,
\end{eqnarray}
which implies that $p$ is the density function of $\displaystyle F_{r_0}^{X}$, thus item $\ref{1.2}$ is proved.\\ By $\textbf{Theorem~\ref{thm1}}$ and the properties of the Gaussian kernel, $\displaystyle G_{0,t}^{X}(r_0,x)$ solves $(\ref{1})$, $(\ref{2})$ and $(\ref{3})$. Hence $G^{X}$ is the Green function of the heat equation with Dirichlet boundary condition which implies item $\ref{1.4}$. Furthermore, $\displaystyle G_{0,t}^{X}(r_0,x)$ can be written as
\begin{eqnarray}
\displaystyle G_{0,t}^{X}(r_0,x)=G_{0,t}(r_0,x)-\int_{0}^{t}G_{\tau,t}(X_\tau,x)p(\tau)d\tau.
\end{eqnarray}
Differentiating with respect to x, applying the jump relation and $(\ref{111})$, we have
\begin{eqnarray}
\displaystyle \frac{\partial}{\partial x}G_{0,t}^{X}(r_0,x)\Big|_{x=X_t^{-}}&=&\frac{\partial}{\partial x}G_{0,t}(r_0,X_t)-p(t)-\int_{0}^{t} \frac{\partial}{\partial x}G_{\tau,t}(X_\tau,X_t)p(\tau)d\tau\\
&=&-2p(t).
\end{eqnarray}
Thus
$\displaystyle p(t)=-\frac{1}{2}\frac{\partial}{\partial x}G_{0,t}^{X}(r_0,x)\Big|_{x=X_t^{-}}$, so it proves item $\ref{1.3}$ of $\textbf{Theorem~\ref{mainthm}}$.

\section*{Acknowledgement}
I am thankful to Errico Presutti, Anna De Masi for helpful suggestions and Victor Perez Abreu for useful comments.

\vskip 1cm

\end{document}